\theoremstyle{plain}
\numberwithin{equation}{section}
\newtheorem{definition}{Definition}[section]
\newtheorem{theorem}[definition]{Theorem}
\newtheorem*{theorem*}{Theorem}
\newtheorem{remark}[definition]{Remark}
\newtheorem*{remark*}{Remark}
\newtheorem*{sideremark*}{Side Remark}
\newtheorem*{claim*}{Claim}
\newtheorem*{q*}{Question}
\newtheorem{lemma}[definition]{Lemma}
\newtheorem*{corollary*}{Corollary}
\newtheorem{dt}[definition]{Definition/Theorem}
\newcommand{\R}{\mathbb{R}}
\newcommand{\na}{\nabla}
\newcommand{\emb}{\hookrightarrow}
\newcommand{\id}{{\rm Id}}
\newcommand{\p}{\partial}
\newcommand{\e}{\varepsilon}
\newcommand{\dd}{{\rm d}}
\newcommand{\G}{\Gamma}
\newcommand{\dvg}{\dd {\rm Vol}_g}
\newcommand{\Exp}{{\bf Exp}}
\newcommand{\tor}{{\bf T}^2}
\newcommand{\diff}{{\bf Diff}}
\newcommand{\sdiff}{{\bf SDiff}}
\newcommand{\hsdiff}{{\bf H}^s \sdiff}
\newcommand{\bra}{\left\langle}
\newcommand{\ket}{\right\rangle}
\newcommand{\T}{{\mathcal{T}}}
\newcommand{\nabar}{{\underline{\na}}}
\newcommand{\pbar}{{\underline{P}}}
\newcommand{\sbar}{{\underline{S}}}
\newcommand{\V}{\mathcal{V}}
\def\XXint#1#2#3{{\setbox0=\hbox{$#1{#2#3}{\int}$ }
\vcenter{\hbox{$#2#3$ }}\kern-.6\wd0}}
\newcommand{\stwo}{{\mathbf{S}^2}}
\newcommand{\met}{{\bra\bullet,\bullet\ket}}
\def\XXint#1#2#3{{\setbox0=\hbox{$#1{#2#3}{\int}$ }
\vcenter{\hbox{$#2#3$ }}\kern-.6\wd0}}
\title{On local solubility of Bao--Ratiu equations on surfaces related to the geometry of diffeomorphism group}
\author{Siran Li}
\address{Siran Li: School of Mathematical Sciences $\&$ CMA-Shanghai, Shanghai Jiao Tong University, No.~6 Science Buildings,
800 Dongchuan Road, Minhang District, Shanghai, China (200240)}
\email{\texttt{siran.li@sjtu.edu.cn}}
\author{Xiangxiang Su}
\address{Xiangxiang Su: School of Mathematical Sciences, Shanghai Jiao Tong University, No.~6 Science Buildings,
800 Dongchuan Road, Minhang District, Shanghai, China (200240)}
\email{\texttt{sjtusxx@sjtu.edu.cn}}
\keywords{Diffeomorphism group; local solubility; Monge--Amp\`{e}re equation; asymptotic direction; mixed-type Partial Differential Equations.}
\subjclass[2020]{35M10; 35J96}
\date{\today}
\begin{document}

\begin{abstract}
We are concerned with the existence of asymptotic directions for the group of volume-preserving diffeomorphisms  of a  closed 2-dimensional surface $(\Sigma,g)$ within the full diffeomorphism group, described by the Bao--Ratiu equations, a system of second-order PDEs introduced in \cite{br, blr}. It is known \cite{p} that asymptotic directions cannot exist globally on any $\Sigma$ with positive curvature. To complement this result, we prove  that asymptotic directions always exist \emph{locally} about a point $x_0 \in \Sigma$ in either of the following cases (where $K$ is the Gaussian curvature on $\Sigma$): (a), $K(x_0)>0$; (b) $K(x_0)<0$; or (c), $K$ changes sign cleanly at $x_0$, \emph{i.e.}, $K(x_0)=0$ and $\na K(x_0) \neq 0$. The key ingredient of the proof is the analysis following Han \cite{h} of a degenerate Monge--Amp\`{e}re equation --- which is of the elliptic, hyperbolic, and mixed types in cases (a), (b), and (c), respectively --- locally equivalent to the Bao--Ratiu equations.  
\end{abstract}
\maketitle

\section{Introduction}\label{sec: intro}

Let $\Sigma$ be a  2-dimensional smooth manifold with a Riemannian metric $g$, and denote by $K$ the Gaussian curvature on $\Sigma$. As we are only concerned with the local solubility of PDEs, without loss of generality, assume throughout this note that $\Sigma$ is closed (\emph{i.e.}, compact and boundaryless).

We establish the local existence of asymptotic directions for the volume-preserving diffeomorphism group $\sdiff(\Sigma,g)$ about a point $x_0 \in \Sigma$, in either of the following cases: 
\begin{itemize}
\item
$\Sigma$ is positively curved about $x_0$, \emph{i.e.}, $K(x_0)>0$;
\item
$\Sigma$ is negatively curved about $x_0$, \emph{i.e.},  $K(x_0)<0$; or
\item
$K$ \emph{changes sign cleanly} at $x_0$, namely that $K(x_0)=0$ and $\na K(x_0) \neq 0$.
\end{itemize}
This is in stark contrast to Palmer's result \cite{p} that any topological $\stwo$ with $K>0$ everywhere admits no global asymptotic directions. Our proof is based on the analysis of a second-order degenerate elliptic PDE of the Monge--Amp\`{e}re type, which is locally equivalent to the  Bao--Ratiu equations introduced in \cite{br, blr}.

\subsection{Extrinsic geometry of volume-preserving diffeomorphism group}
Bao--Ratiu \cite{br} pioneered the research on asymptotic directions of $\sdiff(\Sigma,g)$, the volume-preserving diffeomorphism group on $(\Sigma,g)$, viewed as a submanifold of the full diffeomorphism group $\diff(\Sigma)$. More specifically, define
\begin{align*}
\sdiff(\Sigma,g) = \left\{ \phi \in \diff(\Sigma):\, \phi^\# \dvg = \dvg \right\},
\end{align*}
where $\phi^\#$ is the pullback under $\phi$ and $\dvg$ is the Riemannian volume form on $(\Sigma,g)$. We shall also consider the space $\hsdiff (\Sigma,g)$ of volume-preserving diffeomorphisms with $H^s$-regularity.

Now, following essentially \cite{br}, let us lay out the set up of our note.

Equip $\diff(\Sigma)$ with the $L^2$-based Riemannian metric, which restricts to a right-invariant metric on $\sdiff(\Sigma,g)$. Note that $\diff(\Sigma)$ and $\sdiff(\Sigma,g)$ are infinite-dimensional Lie groups  \cite[\S 2]{blr}. We designate both metrics by $\met$, which induce the Levi-Civita connection on $\sdiff(\Sigma,g)$ and yield an isometric isomorphism 
\begin{equation*}
\G(T\Sigma)\,\, \cong\,\, T_{\eta}\diff(\Sigma)
\end{equation*}
via
\begin{align*}
X \longmapsto X \circ \eta \qquad \text{for any } \eta \in \sdiff(\Sigma,g).
\end{align*}
This together with the Hodge decomposition leads to an $\met$-orthogonal splitting:
\begin{equation}\label{TDiff}
T_\eta \diff(\Sigma) = T_\eta \sdiff(\Sigma,g) \bigoplus \left[T_\eta \sdiff(\Sigma,g)\right]^\perp.
\end{equation}
The components on the right-hand side can be characterised as follows:
\begin{align}\label{TDiff'}
&T_\eta \sdiff(\Sigma,g) := \left\{X\circ \eta:\,X \in \G(T\Sigma) \text{ such that } {\rm div}(X)=0\right\}, \nonumber\\   
&\left[T_\eta \sdiff(\Sigma,g)\right]^\perp = \left\{\na f \circ \eta :\, f:\Sigma\to\R\right\},
\end{align}
where $\na$ is the Riemannian gradient associated with $g$. All the above constructions carry over to $H^s$-vectorfields over $\Sigma$ with $s>2$.

The space of volume-preserving diffeomorphisms on $(\Sigma,g)$ is an important research topic in both global analysis and PDEs for mathematical hydrodynamics. Arnold proved in his seminal 1966 paper \cite{a} that the geodesic equation on $\sdiff(\Sigma,g)$  is precisely the Euler equation describing the motion of an  incompressible inviscid fluid on $(\Sigma,g)$:
\begin{equation}\label{euler, new}
\begin{cases}
\frac{\p v}{\p t} +v \cdot \na v + \na p = 0 \qquad\text{ in } [0,T]\times \Sigma,\\
{\rm div}\, v = 0\qquad\text{ in } [0,T]\times \Sigma,\\
v|_{t=0} = v_0 \qquad \text{ at } \{0\} \times \Sigma
\end{cases}
\end{equation}
for timespan $T>0$, velocity $v \in \G(T\Sigma)$, and pressure $p: \Sigma \to \R$. Equivalently, one may view the Euler equation~\eqref{euler, new} as the horizontal projection of the  
geodesic equation on $\diff(\Sigma,g)$ with respect to the splitting \eqref{TDiff}. If, on the other hand, one considers the \emph{vertical} projection of the  
geodesic equation on $\diff(\Sigma,g)$, the resulting PDE is the Bao--Ratiu equations (see Definition/Theorem~\ref{def/thm} below). We may thus view the study of asymptotic directions as the ``extrinsic counterpart'' to the Euler equation~\eqref{euler, new}. All the discussions in this paragraph can be generalised to higher-dimensional Riemannian manifolds.  See Arnold \cite{a}, Arnold--Khesin \cite{ak}, Bao--Ratiu \cite{br}, Bao--Lafontaine--Ratiu \cite{blr}, and Ebin--Marsden \cite{em}, as well as the references cited therein.

More precisely, let $\nabar$ be the Levi-Civita connection on $\diff(\Sigma)$, and $\pbar$ be the Leray projection of a vectorfield onto its divergence-free (\emph{i.e.}, solenoidal) part. Note that $$(\pbar X) \circ \eta = \pbar_\eta(X\circ \eta),$$ where $$\pbar_\eta: T_\eta\diff(\Sigma) \longrightarrow T_\eta\sdiff(\Sigma,g)$$ is the orthogonal projection in Identity~\eqref{TDiff}. This allows us to define the second fundamental form of $\left(\sdiff(\Sigma,g),\met\right)$ as a submanifold of $\left(\diff(\Sigma),\met\right)$:
\begin{equation}\label{S, def}
\sbar_\eta(X_\eta, Y_\eta) := \big(\na_XY - \pbar(\na_XY)\big)\circ\eta\qquad\text{for any $\eta$, $X$, and $Y$},
\end{equation}
where $\eta \in \sdiff(\Sigma,g)$ and, 
for $X,Y \in \G(T\Sigma)$, we set
\begin{equation}\label{X-eta, Y-eta}
X_\eta = X\circ\eta,\qquad Y_\eta=Y\circ \eta.
\end{equation}

We are at the stage of introducing the following definition and characterisation of asymptotic directions of $\sdiff(\Sigma,g)$, which is the central mathematical object we are interested in.

\begin{dt}[See \cite{blr, br}]\label{def/thm}
A vector field $X_\eta \in T_\eta\sdiff(\Sigma,g)$ is said to be an asymptotic vector/direction if $$\sbar_\eta(X_\eta, X_\eta)=0.$$ For $X \in \G(T\Sigma)$ divergence-free, $X_\eta$ is an asymptotic direction if and only if
\begin{equation}\label{asymp dir eq}
{\rm div}\,X=0\qquad\text{and}\qquad {\rm div}\left(\na_XX\right)=0.
\end{equation}

Equation~\eqref{asymp dir eq} is referred to as the Bao--Ratiu equations.
\end{dt}


\subsection{Main result}

Utilising global geometric arguments (Stokes' theorem and coarea formula, in particular), Palmer proved the following non-existence result \cite[Theorem~1.1]{p}:
\begin{theorem}\label{thm: palmer}
Let $(\Sigma,g)$ be a 2-dimensional compact oriented surface with Gauss curvature $K>0$. Then $\hsdiff(\Sigma,g)$ for $s>4$ admits no global asymptotic directions. 
\end{theorem}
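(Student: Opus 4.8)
The plan is to turn the Bao--Ratiu equations on $\Sigma$ into a degenerate Monge--Amp\`ere equation for a stream function and then exploit the sign $K>0$ globally. First, since $K>0$ on the closed oriented $\Sigma$, Gauss--Bonnet gives $\chi(\Sigma)=\tfrac1{2\pi}\int_\Sigma K\,\dvg>0$, so $\Sigma$ is diffeomorphic to $\stwo$ and $H^1_{\mathrm{dR}}(\Sigma)=0$. A global asymptotic direction $X_\eta\in T_\eta\hsdiff(\Sigma,g)$ corresponds to a divergence-free $X\in\G(T\Sigma)$ with $\mathrm{div}(\na_XX)=0$; dualising $X$ to a closed $1$-form through the area form and using the vanishing first cohomology yields a \emph{stream function} $\psi$ with $X=J\na\psi$, where $J$ is rotation by $\pi/2$ in each tangent plane (so $JX=-\na\psi$, $\mathrm{curl}\,X=\Delta\psi$, and $X$ is tangent to the level sets of $\psi$). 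The regularity $s>4$ makes $\psi$ of class $C^3$ (in dimension $2$) and $\mathrm{div}(\na_XX)$ a continuous function, so the classical Stokes, coarea and Gauss--Bonnet theorems apply. I would assume $\psi$ non-constant (otherwise $X\equiv0$).

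Next I would compute. Using the pointwise Lamb-type identity $\na_XX=\tfrac12\na|X|^2+(\mathrm{curl}\,X)\,JX$ on a surface together with $JX=-\na\psi$ and $\mathrm{curl}\,X=\Delta\psi$, one gets $\na_XX=\tfrac12\na|\na\psi|^2-\Delta\psi\,\na\psi$, hence $\mathrm{div}(\na_XX)=\tfrac12\Delta|\na\psi|^2-\langle\na\Delta\psi,\na\psi\rangle-(\Delta\psi)^2$. Substituting Bochner's formula $\tfrac12\Delta|\na\psi|^2=|\na^2\psi|^2+\langle\na\Delta\psi,\na\psi\rangle+\mathrm{Ric}(\na\psi,\na\psi)$ with $\mathrm{Ric}=Kg$ gives $\mathrm{div}(\na_XX)=|\na^2\psi|^2-(\Delta\psi)^2+K|\na\psi|^2=-2\det(\na^2\psi)+K|\na\psi|^2$, the last step because $\mu_1^2+\mu_2^2-(\mu_1+\mu_2)^2=-2\mu_1\mu_2$ for the eigenvalues $\mu_1,\mu_2$ of $\na^2\psi$ relative to $g$. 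Thus the Bao--Ratiu equations are equivalent to the degenerate Monge--Amp\`ere equation
\begin{equation*}
2\det(\na^2\psi)=K\,|\na\psi|^2 .
\end{equation*}
Since $K>0$, the right-hand side is $\ge0$, so $\det(\na^2\psi)\ge0$ on $\Sigma$; moreover $\det(\na^2\psi)>0$ — hence $\na^2\psi$ definite — wherever $\na\psi\ne0$, and where definite it is positive definite iff $\Delta\psi>0$.

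For the contradiction I would localise near an extremum. Let $p_0$ minimise $\psi$, with value $c_0$, and pick a regular value $c$ slightly above $c_0$ so that $\Omega:=\{\psi<c\}$ is a smooth topological disk containing no critical point besides $p_0$. The field $Y:=\na_XX$ is divergence-free, so the divergence theorem gives $0=\int_{\partial\Omega}\langle Y,\mathbf n\rangle\,\dd s$ with $\mathbf n=\na\psi/|\na\psi|$. From $\langle X,\na\psi\rangle\equiv0$, differentiating along $X$ yields $\langle\na_XX,\na\psi\rangle=-\na^2\psi(X,X)$, and on $\partial\Omega$ one has $|X|=|\na\psi|$ with $T:=X/|\na\psi|$ a unit tangent to $\partial\Omega$; writing $\na^2\psi(T,T)=-\kappa_g|\na\psi|$ for the geodesic curvature $\kappa_g$ of $\partial\Omega$ relative to $\mathbf n$, we obtain $\langle Y,\mathbf n\rangle=\kappa_g|\na\psi|^2$ and hence $\int_{\partial\Omega}\kappa_g|\na\psi|^2\,\dd s=0$. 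On the other hand $\na^2\psi$ is continuous and definite (nonzero determinant) on the connected set $\Omega\setminus\{p_0\}$, hence of constant signature there, and comparison with $\na^2\psi(p_0)\ge0$ forces it to be positive definite on $\Omega\setminus\{p_0\}$ for $c$ close to $c_0$; equivalently $\Omega$ is geodesically convex and $\partial\Omega$ a strictly convex curve, so $\kappa_g$ is strictly negative on $\partial\Omega$ while $|\na\psi|^2>0$ there. This makes $\int_{\partial\Omega}\kappa_g|\na\psi|^2\,\dd s<0$ — a contradiction. (Gauss--Bonnet on the disk $\Omega$, $\int_\Omega K\,\dvg-\int_{\partial\Omega}\kappa_g\,\dd s=2\pi$, provides the robust version of the sign statement as $c\downarrow c_0$, and rules out $\partial\Omega$ being a geodesic.)

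The hard part will be the last geometric input: showing $\na^2\psi$ is positive definite, i.e.\ $\partial\Omega$ strictly convex, on a punctured neighbourhood of $p_0$. For a nondegenerate — or finite-order — minimum this is routine, but a general $C^3$ minimum can be flat, and then one must combine the definiteness of $\na^2\psi$ off the critical set (forced by the Monge--Amp\`ere relation) with a one-dimensional restriction-to-geodesics argument to pin down the sign of $\na^2\psi$ near $p_0$; one must also treat, or exclude, critical sets with non-empty interior and small level sets having several components. The other point requiring care is the sign bookkeeping in the second step — the Lamb-type identity, the orientation of $\mathrm{curl}$, and Bochner's formula — since it is precisely what produces the crucial curvature term $+K|\na\psi|^2$, hence the inequality $\det(\na^2\psi)\ge0$ on which everything rests.
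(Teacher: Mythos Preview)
The paper does not give its own proof of this theorem: it is quoted verbatim from Palmer \cite[Theorem~1.1]{p}, with only the remark that Palmer's argument uses ``global geometric arguments (Stokes' theorem and coarea formula, in particular)''. There is therefore no proof in the paper to compare your attempt against.

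On its own merits, your reduction to the Monge--Amp\`ere identity $2\det(\na^2\psi)=K|\na\psi|^2$ is correct and coincides with Equation~\eqref{MA, key} of the paper, and your level-set/divergence-theorem computation leading to $\int_{\{\psi=c\}}\kappa_g|\na\psi|^2\,\dd s=0$ is sound and is indeed in the spirit of the Stokes/coarea methods the paper attributes to Palmer. The difficulty you flag at the end is, however, a genuine gap rather than a routine step: for a merely $C^3$ stream function the minimum set of $\psi$ need not be a single nondegenerate point, sublevel sets $\{\psi<c\}$ near the minimum need not be disks, and the ``continuity of signature'' argument for $\na^2\psi$ requires the complement of the critical set to be connected near $p_0$, which you have not established. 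Your sketch (``restriction-to-geodesics'', excluding interior critical sets) does not yet indicate a mechanism that actually forces positive definiteness in the degenerate case. A cleaner closure --- and, judging from the paper's hint, likely closer to Palmer's route --- is to avoid localising at a single extremum altogether: integrate the identity $\int_{\{\psi=c\}}\kappa_g|\na\psi|^2\,\dd s=0$ against a suitable weight in $c$ via the coarea formula and combine with Gauss--Bonnet applied to $\{\psi<c\}$, so that the positivity of $K$ produces a global sign contradiction without any pointwise control of $\na^2\psi$ near critical points.
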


In contrast to Theorem~\ref{thm: palmer}, we prove in this note that there is \emph{no local obstruction to the existence of asymptotic directions on surfaces}. More precisely, our main result is the following:

\begin{theorem}\label{thm: main}
Let $(\Sigma,g)$ be a 2-dimensional Riemannian surface and fix an interior point $x_0 \in \Sigma$. Assume either of the following cases: 
\begin{enumerate}
\item
$\Sigma$ is positively curved about $x_0$, \emph{i.e.}, $K(x_0)>0$;
\item
$\Sigma$ is negatively curved about $x_0$, \emph{i.e.},  $K(x_0)<0$; or
\item
$K$ \emph{changes sign cleanly} at $x_0$, namely that $K(x_0)=0$ and $\na K(x_0) \neq 0$.
\end{enumerate}
Then there exists a solution $X\in \G(T\mathcal{V})$ to the Bao--Ratiu Equation~\eqref{asymp dir eq} in a neighbourhood $\mathcal{V} \subset \Sigma$ of $x_0$. That is, an asymptotic direction exists locally about $x_0$.

More precisely, suppose that the Riemannian metric $g$ is of $H^r$-regularity. In Case~(1), for $r >5$ we obtain $X \in H^{r-3}(\mathcal{V})$; in Case~(2), for $r \geq 8$ we obtain $X \in H^{r-4}(\mathcal{V})$; and in Case~(3), for $r \geq 9$ we obtain $X \in H^{r-5}(\mathcal{V})$.
\end{theorem}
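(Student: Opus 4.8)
The plan is to reduce the Bao--Ratiu system \eqref{asymp dir eq} to a single scalar Monge--Amp\`ere equation for a stream function, and then to invoke Han's local solvability theory for degenerate equations of this type \cite{h}. Fix a simply connected coordinate neighbourhood $\V \ni x_0$. Since only local solubility is sought, I would write $X = J\na\psi$ on $\V$, where $J$ is the rotation by $\pi/2$ on $(T\Sigma, g)$ fixed by the orientation and $\psi$ is a stream function; this makes ${\rm div}\,X = 0$ automatic. For the second equation of \eqref{asymp dir eq}, apply the pointwise identity, valid for any vector field on a Riemannian surface,
\begin{equation*}
{\rm div}(\na_X X) = {\rm tr}\big((\na X)^2\big) + X({\rm div}\,X) + {\rm Ric}(X,X),
\end{equation*}
which, using ${\rm div}\,X = 0$, ${\rm Ric} = Kg$ in dimension two, and ${\rm tr}\big((\na X)^2\big) = ({\rm tr}\,\na X)^2 - 2\det(\na X) = -2\det(\na X)$, reduces to ${\rm div}(\na_X X) = -2\det(\na X) + K|X|^2$. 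Since $J$ is parallel with $\det J = 1$, one has $\na X = J\circ\na^2_g\psi$, hence $\det(\na X) = \det(\na^2_g\psi)$ as sections of $\End(T\Sigma)$, while $|X|^2 = |\na\psi|^2_g$. Therefore \eqref{asymp dir eq} is locally equivalent to the single fully nonlinear PDE
\begin{equation*}
\det\big(\na^2_g\psi\big) \;=\; \tfrac{1}{2}\,K\,|\na\psi|^2_g ,
\end{equation*}
which in local coordinates takes the form $\det\big(\na^2\psi + B(x,\na\psi)\big) = F(x,\na\psi)$ with $B$ symmetric and linear, and $F$ quadratic, in $\na\psi$ --- a Monge--Amp\`ere equation, identically degenerate (right-hand side $\equiv 0$) when $g$ is flat. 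I would seek a solution with $\na\psi(x_0)\neq 0$, so that near $x_0$ the sign of the right-hand side equals that of $K$ and the asymptotic direction $X = J\na\psi$ is nowhere vanishing.

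\textbf{The trichotomy.} Prescribing a $2$-jet of $\psi$ at $x_0$ with $\na\psi(x_0)\neq 0$ and Hessian adapted to the sign of $K(x_0)$, the Monge--Amp\`ere equation above is, near $x_0$: \emph{(1)} uniformly elliptic if $K(x_0)>0$ (positive right-hand side; take the prescribed Hessian positive definite); \emph{(2)} strictly hyperbolic if $K(x_0)<0$ (negative right-hand side); \emph{(3)} of Tricomi/mixed type if $K$ changes sign cleanly, since then $F$ vanishes to exactly first order along the smooth curve $\{K=0\}$ --- indeed $\na\big(K|\na\psi|^2_g\big)(x_0) = |\na\psi(x_0)|^2_g\,\na K(x_0)\neq 0$. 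In case (1), linearising at an approximate polynomial solution yields a uniformly elliptic second-order operator, and local existence follows by the implicit function theorem on a small ball; in case (2), one solves a non-characteristic Cauchy problem for the strictly hyperbolic equation (equivalently, a first-order symmetric hyperbolic system). Case (3) carries the substance: because $F$ and $B$ depend only on $(x,\na\psi)$ --- a lower-order dependence for the linearisation --- the equation fits, after freezing $\na\psi$ in $F$ and $B$ and iterating, the framework of Han's local solvability theorem for Monge--Amp\`ere equations whose right-hand side changes sign cleanly \cite{h}, yielding a local solution $\psi$ in an appropriate weighted Sobolev space.

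\textbf{Regularity and the main obstacle.} If $g \in H^r$, the Christoffel symbols lie in $H^{r-1}$ and $K$, hence $F$, in $H^{r-2}$; tracking the derivative losses --- through the elliptic estimate in case (1), the hyperbolic energy estimate in case (2), and the Nash--Moser scheme underlying \cite{h} in case (3) --- yields $\psi$ of regularity roughly $H^{r-2}$, $H^{r-3}$, $H^{r-4}$ respectively, whence $X = J\na\psi \in H^{r-3}$, $H^{r-4}$, $H^{r-5}$, as asserted, once $r$ exceeds the stated thresholds. I expect the genuine obstacle to be case (3): establishing local solvability of the mixed-type Monge--Amp\`ere equation requires the degenerate a priori estimates in weighted norms and the attendant Nash--Moser iteration of \cite{h}, and one must check carefully that the quasilinear lower-order term $\tfrac{1}{2}K|\na\psi|^2_g$ spoils neither the clean first-order sign change of the right-hand side nor the structural hypotheses of that theorem. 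The reduction, and cases (1)--(2), are by comparison routine.
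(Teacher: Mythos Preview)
Your outline matches the paper's approach: reduce \eqref{asymp dir eq} to the scalar Monge--Amp\`ere equation for a stream function, force $\nabla\psi(x_0)\neq 0$ (the paper does this via the explicit shift $f(x)=x_1+\tilde f(x)$, Equation~\eqref{2024.7.11.5}), and treat the three cases by PDE type. Two corrections are in order. First, no Nash--Moser is used anywhere: in cases~(2) and~(3) the paper, following Han~\cite{h}, builds an explicit polynomial approximate solution $\hat h$, rescales, differentiates once in the spatial direction, and recasts the equation as a \emph{positive symmetric} first-order hyperbolic system for $U=(\partial_t h,\partial_z h,\partial_{zt}h,\partial_{zz}h)^\top$, which is then solved by a straight contraction-mapping argument (Lemma~\ref{lemma 3.1}) with no derivative loss. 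Second, your closing caveat is precisely where the substantive work lies, and it is more than a routine check: the extra lower-order terms in \eqref{2024.2.22} --- the Christoffel contribution $\Gamma^1_{ij}$ and the cross term $2g^{1k}\partial_k\tilde f$, both absent from the Darboux equation --- force a delicate, interdependent choice of the quadratic and cubic coefficients and of two free parameters $(\gamma,R)$ in the initial approximate polynomial~\eqref{initial approx}, solely in order to secure positive definiteness of the symmetrizer $\Theta$; see \eqref{2024.6.14.1} and \eqref{2024.6.14.3}. Case~(3) is therefore not a black-box invocation of~\cite{h} but a careful re-run of its machinery with different, more constrained data.
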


Q. Han \cite{h} established the local existence of isometric embedding $(\Sigma,g)\emb\R^3$ under the ``changing sign cleanly'' condition, by way of proving the local existence of solutions to the Darboux Equation~\eqref{darboux} via transforming it to a symmetrisable hyperbolic system. Our strategy for the proof of Theorem~\ref{thm: main} --- especially for the hyperbolic and mixed-type cases --- relies crucially on the developments in \cite{h}.

For this purpose, we shall rewrite the Bao--Ratiu Equation~\eqref{asymp dir eq} as a possibly degenerate Monge--Amp\`{e}re equation following \cite{br}, and then transform it to a less degenerate equation that is also of the Monge--Amp\`{e}re type. We are indebted to Prof.~Qing Han for suggesting to us the latter transform, which is crucial to our proof.  Our final resulting equation, compared with the ``intermediate product'', bears more resemblance to the Darboux Equation~\eqref{darboux}.

The above procedure will be elaborated in the next section.

\section{A degenerate Monge--Amp\`{e}re equation for asymptotic directions}

\noindent


Now we show that the Bao--Ratiu Equation~\eqref{asymp dir eq} for the asymptotic directions is \emph{locally} equivalent to a second-order nonlinear PDE~\eqref{MA, key} of the Monge--Amp\`{e}re type for a scalar function. By a slight abuse of notations,  we shall also refer to Equation~\eqref{MA, key} as the   Bao--Ratiu equation. 


Suppose that $X \in {\bf H}^s(\Sigma; T\Sigma)$ is an asymptotic direction; $s>2$. Since $X$ is divergence-free, \emph{in a local neighbourhood $\V$ on $\Sigma$} one has that
\begin{equation}\label{J na f}
X=J\na f\qquad \text{for some }   f\in H^{s+1}(\V),
\end{equation}
where $J$ is the almost complex structure on $\Sigma$. In fact, the choice of $f$ can be made \emph{global} whenever the first Betti number of $\Sigma$ is zero. 

Equation~\eqref{J na f} is equivalent to
\begin{align*}
X= -\left(\star\dd f\right)^\sharp,
\end{align*}
where $\star$ is the Hodge star operator, $\sharp$ is the musical isomorphism between $T^*\Sigma$ and $T\Sigma$, and $\dd$ is the exterior differential. In local coordinates $\{\p_1, \p_2\}$ this reads
\begin{align*}
X=(\p_2f)\p_1 - (\p_1f)\p_2.
\end{align*}
Recall that $J(\p_1)=\p_2$ and $J(\p_2)=-\p_1$. 

It is computed in Bao--Ratiu \cite{br} that Equation~\eqref{J na f} and the second identity in Equation~\eqref{asymp dir eq} together yield the following PDE for $f$ of the Monge--Amp\`{e}re type:
\begin{equation*}
\det\left(\partial_i\partial_j f -\G^k_{ij}\partial_k f\right) = \frac{K}{2}\det(g)g^{ij}\partial_i f \partial_j f, 
\end{equation*}
where $\G^k_{ij}$ and $K$ are  the Christoffel symbols and the Gaussian curvature on $(\Sigma,g)$, respectively. This PDE can be expressed in a coordinate-free manner:
\begin{align}\label{MA, key}
\det\left(\na^2 f\right) = \frac{K}{2}\det(g) \left|\na f\right|^2_g.
\end{align}
Here and hereafter, $\na$ is the covariant derivative/Levi-Civita connection on $(\Sigma,g)$. The Riemannian length of a vector relative to $g$ is denoted as $|\bullet|_g$; hence $$g^{ij}\partial_i f \partial_j f\equiv \left|\na f\right|^2_g.$$ The Hessian of $f$ with respect to $g$ is $$\na^2 f \equiv \partial_i\partial_j f -\G^k_{ij}\partial_k f.$$ Einstein's summation convention is assumed throughout.

We refer the reader to the literature on degenerate Monge--Amp\`{e}re equations, including Amano \cite{amano}, Daskalopoulos--Savin \cite{ds}, B. Guan \cite{g}, P. Guan \cite{gpf}, P. Guan--Sawyer \cite{gs}, and C.-S. Lin \cite{l}, amongst others. See also Caffarelli--Nirenberg--Spruck \cite{cns}, Cheng--Yau \cite{cy}, Figalli \cite{f},  Gutiérrez \cite{gu}, and many others for more developments on  Monge--Amp\`{e}re equations. Let us also mention the work \cite{mv} by Moser--Veselov on interesting connections between hydrodynamical models, symplectic geometry, and Monge--Amp\`{e}re equations of varying types.

The Darboux equation for isometric immersions/embeddings of $(\Sigma,g)$ into $\R^3$ reads:
\begin{equation}\label{darboux}
\det\left(\na^2 f\right) = {K}\det(g)\left(1-|\na f|_g^2\right)\qquad \text{ with } |\na f|_g<1.
\end{equation}
It is classically known to be equivalent to the Gauss--Codazzi equations, which are the compatibility equations for the existence of an isometric immersion $(\Sigma,g) \to \R^3$. See the monograph \cite{hh} by Han--Hong and the many references cited therein for a comprehensive survey of isometric immersions and Gauss--Codazzi equations. One cannot fail to notice the resemblance between Equations~\eqref{MA, key} and \eqref{darboux}. However, as noted in Bao--Ratiu \cite[\S 2B, p.63]{br}, such a resemblance may be misleading, for the degeneracy behaviours of these two Monge--Amp\`{e}re-type equations are essentially different. For instance, $f \equiv {\rm constant}$ is a trivial solution to the Bao--Ratiu Equation~\eqref{MA, key}, but it does not satisfy the Darboux Equation~\eqref{darboux} unless $K=0$. 

Nonetheless, as long as only local solutions are concerned, better resemblance between Equations~\eqref{MA, key} and \eqref{darboux} may be unveiled via a simple transform. (We are grateful to Prof.~Qing Han for kindly pointing this out to us.) Consider the simplest case of the Euclidean space: $g_{ij}=\delta_{ij}$ and $\Gamma^k_{ij} \equiv 0$. Shifting to the new variable
\begin{equation} \label{2024.7.11.5}
\tilde{f}(x):=f(x)-x_1
\end{equation}
transforms the Bao--Ratiu Equation~\eqref{MA, key} into
\begin{equation}\label{Monge-Ampere Bao-Ratiu, eucl}
\det \left(\nabla^2 \tilde{f}\right) - \frac{K}{2}\left(1+2\partial_1 \tilde{f}+\left|\nabla \tilde{f}\right|^2\right) = 0,
\end{equation}
which differs from the Darboux Equation~\eqref{darboux} only by the sign in front of $\left|\na \tilde{f}\right|^2$ and the term $2\p_1\tilde{f}$, which is majorised by $1+\left|\na \tilde{f}\right|^2$.  Here $\na$ is the Euclidean gradient.

The general case (\emph{i.e.}, $g_{ij} \neq \delta_{ij}$ and $\Gamma^k_{ij} \neq 0$) appears to be more technically involved, but the idea is essentially the same as above. By considering  $f(x)=x_1+\tilde{f}(x)$ once again, we rewrite the Bao--Ratiu Equation~\eqref{MA, key} as
\begin{align} \label{2024.2.22}
\mathcal{G}\left(\tilde{f}\right)&:=\det\left\{\partial_i\partial_j \tilde{f} -\G^k_{ij}(\delta_{1k}+\partial_k \tilde{f})\right\} \nonumber\\
&\qquad - \frac{K}{2}\det(g)g^{ij}\left(\delta_{1i} \delta_{1j}+\delta_{1i} \partial_j \tilde{f}+\delta_{1j} \partial_i \tilde{f}+\partial_i \tilde{f} \partial_j \tilde{f}\right)\nonumber\\
&=\det\left\{\na_i\na_j \tilde{f}-\G^1_{ij}\right\} - \frac{K}{2}\det(g)\left\{g^{11}+2g^{1k}\p_k\tilde{f}+\left|\na\tilde{f}\right|_g\right\}\nonumber\\
&=0. 
\end{align}
This reduces to Equation~\eqref{Monge-Ampere Bao-Ratiu, eucl} when $g_{ij}=\delta_{ij}$.

Despite the close similarities between Equations~\eqref{2024.2.22} and  \eqref{darboux}, it should be stressed that the presence of extra lower order terms in the former brings about new difficulties in the proof of local solubility, especially in the case that the Gaussian curvature changes sign cleanly. Indeed, among various  issues, the choice of the initial approximate solution $\hat{h}$ in Equation~\eqref{initial approx} below is highly sensitive to these lower order terms.

The following important remark is in order:
\begin{remark} \label{remark: Gamma(0)=0}
To prove the Main Theorem~\ref{thm: main}, it suffices to show the local solubility of the Bao--Ratiu Equation~\eqref{MA, key} of the degenerate Monge--Amp\`{e}re type. Without loss of generality, we may set the marked point $x_0$ in Theorem~\ref{thm: main} to be the origin and consider a neighbourhood of it in $\mathbb{R}^2$. Furthermore, by  selecting a suitable coordinate system $\{x_1, x_2\}$, we may set $$\Gamma_{ij}^k(0)=0.$$
\end{remark}

The proof of our main Theorem~\ref{thm: main} is the content of \S\S\ref{sec, >}--\ref{sec: ><} below.

\section{The case of positive curvature}\label{sec, >}

In this section, we prove Theorem~\ref{thm: main} in the case that the Gaussian curvature $K$ is positive at $x_0=0$. By this assumption one may write
\begin{equation*} 
K(x_1,x_2)=2R+\mathcal{O}\left(|x_1|+|x_2|\right),
\end{equation*}
where
\begin{equation*}
R:=\frac{1}{g^{11}(0)\det\,g(0)}>0.
\end{equation*}
Our goal is to establish the existence of a unique solution for the Bao--Ratiu Equation~\eqref{2024.2.22} under suitable boundary conditions.

We begin by introducing an initial approximate solution:
\begin{equation} \label{2024.2.23.1}
\hat{h}(x_1,x_2)=x_1^2+3x_1 x_2+\frac{5}{2} x_2^2+P_3(x_1,x_2), 
\end{equation}
where $P_3$ is a homogeneous polynomial of degree 3. Note that if we naively choose $P_3=0$, then $\mathcal{G}\left(\hat{h}\right)=\mathcal{O}(|x_1|+|x_2|)$. (Recall the operator $\mathcal{G}$ from Equation~\eqref{2024.2.22}.)

We improve upon the previous naive bound by introducing a nontrivial $P_3$ such that
\begin{equation}\label{G1}
\mathcal{G}\left(\hat{h}\right)=\mathcal{O}\left(x_1^2+x_2^2\right).
\end{equation}
Such $P_3$ can be found via a simple undetermined coefficient argument: take the ansatz $P_3=a x_1^3+b x_1^2 x_2+c x_1x_2^2+d x_2^3$ and substitute it into Equation~\eqref{2024.2.22}. By requiring that the coefficients of $x_1$ and $x_2$ to zero, we obtain an algebraic system for $(a,b,c,d)$. The rank of the coefficient matrix of this system is less than 4, so it has infinitely many solutions. Any such solution for $(a,b,c,d)$ yields a desired $P_3$.

To proceed, set 
\begin{equation} \label{2024.4.22.1}
\begin{cases}
x=\varepsilon^4 \tilde{x},\\
\tilde{f}(x)=\hat{h}(x)+\varepsilon^{11} h(\tilde{x}).
\end{cases}
\end{equation}
The powers $\e^4$ and $\e^{11}$ here are chosen only for convenience --- in particular, to avoid fractional powers --- and are certainly  nonunique. Such a choice ensures the validity of Equation~\eqref{2024.2.23.2}, which will be used to derive the second inequality of \eqref{2024.4.23.1} in the proof of Lemma~\ref{lemma 2.1}.

As $\Gamma_{ij}^k(0)=0$ (see Remark~\ref{remark: Gamma(0)=0}), we obtain that
\begin{align} \label{2024.4.22.2}
\Gamma_{ij}^k(x)&=\Gamma_{ij}^k(0)+\varepsilon^4 \tilde{x} \cdot \left\{D_{x} \Gamma_{ij}^k\right\}(0)+\mathcal{O}(x^2)\nonumber\\
&=\varepsilon^4 \mathcal{O}(\tilde{x})
\end{align}
by Taylor expansion and the scaling $x=\e^4\tilde{x}$ in Equation~\eqref{2024.4.22.1}. Substituting Equations~\eqref{2024.2.23.1}--\eqref{2024.4.22.2} into \eqref{2024.2.22} and arranging terms according to powers of $\varepsilon$, we obtain the PDE for $h$:
\begin{equation} \label{2024.2.23.2}
\varepsilon^3 \left(5\partial_{\tilde{1}\tilde{1}}h-6\partial_{\tilde{1}\tilde{2}}h+2\partial_{\tilde{2}\tilde{2}}h\right)=\varepsilon^6 G\left(\varepsilon,\tilde{x}_1,\tilde{x}_2,\tilde{D}h,\tilde{D}^2 h\right), 
\end{equation}
where $G$ is $C^\infty$ in $\tilde{D}h$ and $\tilde{D}^2 h$, and is $H^{r-4}$ in $\varepsilon,\tilde{x}_1$, and $\tilde{x}_2$. This is because the choice of $P_3$ in Equation~\eqref{2024.2.23.1} depends on up to one derivative of $\G^k_{ij}$ (hence up to two derivatives of $g \in H^r$), and the operator $\mathcal{G}$ in Equation~\eqref{2024.2.22} brings about two more derivatives. Here and hereafter we write $\partial_{\tilde{i}} \equiv \partial_{\tilde{x}_i}$.


Note in passing that, by an inspection on Equations~\eqref{2024.7.11.5}, \eqref{2024.2.23.1}, and \eqref{2024.4.22.1},  our local solution constructed for Equation~\eqref{MA, key} does \emph{not} yield a constant function $f$. This in turn gives us a nonzero asymptotic direction $X=J\na f$; see Equation~\eqref{J na f}.

It remains to prove the local existence of solutions to Equation~\eqref{2024.2.23.2}. Denote by $\Omega$ the unit disc throughout the rest of this section. We first consider the linearised problem, namely Equation~\eqref{2024.4.23.2} below, which is a  standard linear second-order elliptic PDE. 

\begin{lemma} \label{lemma 2.1}
Given any $\underline{h} \in H^m$ with $m > 3$ and $\|\underline{h}\|_{H^m} \le M$, where $M$ is a fixed constant. There exists a unique solution $h \in H^m$ to the boundary value problem:
\begin{equation} \label{2024.4.23.2}
\left\{ 
\begin{array}{cc}
5\partial_{\tilde{1}\tilde{1}}h-6\partial_{\tilde{1}\tilde{2}}h+2\partial_{\tilde{2}\tilde{2}}h=\varepsilon^3 G\left(\varepsilon,\tilde{x}_1,\tilde{x}_2,\tilde{D}\underline{h},\tilde{D}^2 \underline{h}\right) &\text{ in }\Omega,\\
 h=0\qquad  \text{ on } \partial \Omega.
\end{array}
\right.
\end{equation}
Moreover, there is a constant $\varepsilon_0>0$ depending only on $m$ and $M$ such that 
$$
\|h\|_{H^m} \le \frac{M}{2}\qquad \text{ for any } \varepsilon \in(0,\varepsilon_0).
$$
\end{lemma}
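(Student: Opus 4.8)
The plan is to treat Equation~\eqref{2024.4.23.2} as a classical linear elliptic Dirichlet problem with a fixed (frozen-coefficient) source term, and then extract the smallness of the solution from the explicit power of $\varepsilon$ multiplying that source. First I would observe that the second-order operator $5\partial_{\tilde{1}\tilde{1}}-6\partial_{\tilde{1}\tilde{2}}+2\partial_{\tilde{2}\tilde{2}}$ is elliptic: its symbol is $5\xi_1^2-6\xi_1\xi_2+2\xi_2^2$, whose discriminant is $36-40=-4<0$, so the quadratic form is (positive) definite. After a linear change of coordinates $\tilde{x}\mapsto y$ bringing this form to the Laplacian $\partial_{y_1 y_1}+\partial_{y_2 y_2}$ (the unit disc $\Omega$ becomes a fixed ellipse, still a smooth bounded domain), the problem is the standard Poisson equation $\Delta h = F$ in that domain with zero Dirichlet data, where $F:=\varepsilon^3 G(\varepsilon,\tilde{x},\tilde{D}\underline{h},\tilde{D}^2\underline{h})$.

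Next I would check the regularity of the source. Since $\underline{h}\in H^m$ with $m>3$, Sobolev embedding gives $\underline{h}\in C^2$, so $\tilde{D}\underline{h}$ and $\tilde{D}^2\underline{h}$ are at worst in $H^{m-2}$, an algebra since $m-2>1$; combined with the stated regularity of $G$ ($C^\infty$ in the derivative slots, $H^{r-4}$ in $(\varepsilon,\tilde{x})$, with $r-4$ large enough under the hypotheses of Theorem~\ref{thm: main}, Case~(1)), composition and multiplication keep $G(\varepsilon,\tilde{x},\tilde{D}\underline{h},\tilde{D}^2\underline{h})\in H^{m-2}$, hence $F\in H^{m-2}$, with a bound $\|F\|_{H^{m-2}}\le \varepsilon^3 C(m,M)$ depending only on $m$ and $M$ (through $\|\underline{h}\|_{H^m}\le M$ and the fixed $H^{r-4}$-norms of the coefficients on the compact neighbourhood). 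Then elliptic $L^2$-regularity for the Dirichlet problem on a smooth bounded domain yields a unique solution $h\in H^m$ with
$$
\|h\|_{H^m}\;\le\; C_{\mathrm{ell}}\,\|F\|_{H^{m-2}}\;\le\; C_{\mathrm{ell}}\,C(m,M)\,\varepsilon^3,
$$
where $C_{\mathrm{ell}}$ depends only on $m$ and the (fixed) domain. Undoing the linear change of variables only multiplies norms by constants depending on $m$, so the same type of estimate persists in the original coordinates.

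Finally, choosing $\varepsilon_0=\varepsilon_0(m,M)>0$ so small that $C_{\mathrm{ell}}\,C(m,M)\,\varepsilon_0^3\le M/2$ gives $\|h\|_{H^m}\le M/2$ for all $\varepsilon\in(0,\varepsilon_0)$, which is exactly the asserted bound. Uniqueness is immediate from uniqueness for the Poisson--Dirichlet problem. I expect the only genuinely delicate point to be the bookkeeping that $G$, after composition with $\tilde{D}\underline{h}$ and $\tilde{D}^2\underline{h}$, really lands in $H^{m-2}$ with a constant depending only on $m$ and $M$ and not on $\varepsilon$ --- i.e.\ verifying that the $\varepsilon$-dependence of $G$ is benign (bounded in $H^{r-4}$ uniformly as $\varepsilon\to 0$, which is why $\varepsilon$ is kept as an $H^{r-4}$-variable rather than differentiated) and that no hidden negative power of $\varepsilon$ creeps in from the scaling in Equation~\eqref{2024.4.22.1}. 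Everything else is the textbook elliptic estimate plus a linear change of coordinates, and the factor $\varepsilon^3$ in front of $G$ does the rest.
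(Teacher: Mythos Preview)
Your proposal is correct and follows essentially the same approach as the paper: invoke classical elliptic existence/regularity for the linear Dirichlet problem to get $\|h\|_{H^m}\le C_m\|\varepsilon^3 G\|_{H^{m-2}}$, bound $\|G\|_{H^{m-2}}$ in terms of $m$ and $M$ via Sobolev embedding (using $m>3$), and absorb everything with the $\varepsilon^3$ factor by choosing $\varepsilon_0$ small. Your explicit discriminant check and coordinate change to the Laplacian are extra but harmless; the paper simply cites \cite{gt} directly for the elliptic step.
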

\begin{proof}[Proof of Lemma~\ref{lemma 2.1}]
The existence of solution follows from classical elliptic theory \cite{gt}.

Moreover, we bound
\begin{align}  \label{2024.4.23.1}
\|h\|_{H^m} &\le C_m \left\|\varepsilon^3 G\left(\varepsilon,\tilde{x}_1,\tilde{x}_2,\tilde{D}\underline{h},\tilde{D}^2 \underline{h}\right)\right\|_{H^{m-2}} \nonumber\\
&\le C_m \e^3 C_G\|\underline{h}\|_{H^m}\nonumber\\
& \le {M}\slash{2}.
\end{align}
The first line follows from the standard Calder\'{o}n--Zygmund elliptic estimate \cite{gt}. For the second line, note that $G$ equals to the linear combination of products of $\tilde{D}^2 \underline{h}$, $\tilde{D}\underline{h}$, and $\tilde{x}$ with absolutely bounded coefficients. Thanks to the Sobolev inequality $\sum_{j=0}^2\left\|\tilde{D}^j \underline{h}\right\|_{L^\infty} \leq C\|\underline{h}\|_{H^m}$ for $m >3$,  we deduce that $$\|G\|_{H^{m-2}} \leq C_G\|\underline{h}\|_{H^m},$$ where $C_G$ is a uniform constant determined solely by the structure of $G$ (which may depend on $M$). For the final line, we use the assumption $\|\underline{h}\|_{H^m} \le M$ and choose $\e_0$ suitably small.   \end{proof}

The solubility of Equation~\eqref{2024.2.23.2} now follows from a standard fixed point argument.

 Consider the mapping 
$$
\T : H^m \rightarrow H^m, \qquad\qquad \T\underline{h}:=h,
$$ 
where $h$ is the solution to Equation~\eqref{2024.4.23.2} found by Lemma~\ref{lemma 2.1} above. Note that for $m > 3$, one has that $\|\T \underline{h}\|_{H^m} \le M/2$ whenever $\|\underline{h}\|_{H^m} \le M$.

To see that $\T$ is a contraction mapping, denote $$\mathcal{L}[h]:=5\partial_{\tilde{1}\tilde{1}}h-6\partial_{\tilde{1}\tilde{2}}h+2\partial_{\tilde{2}\tilde{2}}h,$$ which is the principal part of Equation~\eqref{2024.4.23.2}. Then
\begin{align*}
\mathcal{L}[h_1]-\mathcal{L}[h_2]&=\mathcal{L}[h_1-h_2]\nonumber\\
&=\varepsilon^3 \left\{G\left(\varepsilon,\tilde{x}_1,\tilde{x}_2,\tilde{D}\underline{h}_1,\tilde{D}^2 \underline{h}_1\right)-G\left(\varepsilon,\tilde{x}_1,\tilde{x}_2,\tilde{D}\underline{h}_2,\tilde{D}^2 \underline{h}_2\right)\right\},
\end{align*}
where $h_i=\T\underline{h}_i$; $i\in \{1,2\}$. Note that $G\left(\varepsilon,\tilde{x}_1,\tilde{x}_2,\tilde{D}\underline{h},\tilde{D}^2 \underline{h}\right)$ is nonlinear in $\underline{h}$.  Arguments similar to those for Equation~\eqref{2024.4.23.1} give us
\begin{align*}
\left\|\T\underline{h}_1-\T\underline{h}_2\right\|_{H^m} &= \|h_1-h_2\|_{H^m} \nonumber\\
 &\le C_m \e^3\left\|\left\{G\left(\varepsilon,\tilde{x}_1,\tilde{x}_2,\tilde{D}\underline{h}_1,\tilde{D}^2 \underline{h}_1\right)-G\left(\varepsilon,\tilde{x}_1,\tilde{x}_2,\tilde{D}\underline{h}_2,\tilde{D}^2 \underline{h}_2\right)\right\}\right\|_{H^{m-2}} \\
& \le \frac{1}{2} \|\underline{h}_1-\underline{h}_2\|_{H^m},
\end{align*}
for $\e=\e(m)$ suitably small.

Therefore, applying the Banach fixed point theorem, we deduce the existence of a solution $h \in H^m$ to Equation~\eqref{2024.2.23.2} with $\|h\|_{H^m} \le M$; $m>3$. In view of Lemma~\ref{lemma 2.1} and that $G=G\left(\varepsilon,\tilde{x}_1,\tilde{x}_2,\tilde{D}\underline{h},\tilde{D}^2 \underline{h}\right)$ is $H^{r-4}$ in $\tilde{x}$, we take $m-2=r-4$ here, namely that 
\begin{equation*}
m=r-2;\qquad r>5.
\end{equation*} 
Thanks to Equation~\eqref{2024.4.22.1}, the above obtained $h \in H^{r-2}$ leads to $\tilde{f} \in H^{r-2}$, which in turn yields the local existence of asymptotic direction $X \in H^{r-3}$ by Equations~\eqref{J na f} and \eqref{2024.7.11.5}.

 This proves the local existence of a nontrivial asymptotic direction  for the case of Gaussian curvature $K(x_0)>0$. 


\section{The case of negative curvature}\label{sec, <}
In this section, we prove Theorem~\ref{thm: main} in the case when the Gaussian curvature $K$ is negative at $x_0=0$. The overall strategy is adapted from Han \cite{h}, but for $K(x_0)<0$ we are in the strictly hyperbolic case, which is simpler than the case of mixed types.

In this case, we express the Gaussian curvature as 
\begin{equation} 
K(x_1,x_2)=2R+\mathcal{O}\left(|x_1|+|x_2|\right), 
\end{equation}
where
$$
R:=-\frac{1}{g^{11}(0)\det\,g(0)}<0.
$$

Now we define the initial approximate solution
\begin{equation} \label{2024.4.24.1}
\hat{h}(x_1,x_2)=\frac{1}{2}x_1^2-\frac{1}{2} x_2^2+P_3(x_1,x_2). 
\end{equation}
(Compare with Equation~\eqref{2024.2.23.1} in the case $K(x_0)>0$.) Note here the  hyperbola $\frac{1}{2}x_1^2-\frac{1}{2} x_2^2$ in $\hat{h}$. As with the elliptic case in \S\ref{sec, >} above, we may choose a nice degree-3-homogeneous polynomial $P_3$ verifying that
\begin{equation}\label{G2}
\mathcal{G}\left(\hat{h}\right)=\mathcal{O}\left(x_1^2+x_2^2\right).
\end{equation}
Recall once again Equation~\eqref{2024.2.22} for the operator  $\mathcal{G}$.

To proceed, adopt the scaling and perturbation constructions as in Equation~\eqref{2024.4.22.1}:
\begin{equation}\label{f-tilde, hyperbolic}
\begin{cases}
x=\varepsilon^4 \tilde{x},\\
\tilde{f}(x)=\hat{h}(x)+\varepsilon^{11} h(\tilde{x}).
\end{cases}
\end{equation}
This together with Equations~\eqref{2024.2.22} and \eqref{2024.4.24.1} yields the \emph{hyperbolic} PDE:
\begin{equation} \label{2024.4.24.2}
\partial_{\tilde{2}\tilde{2}}h-\partial_{\tilde{1}\tilde{1}}h=\varepsilon^3 G\left(\varepsilon,\tilde{x}_1,\tilde{x}_2,\tilde{D}h,\tilde{D}^2 h\right). 
\end{equation}
As in \S\ref{sec, >} above, $G$ is $C^\infty$ in $\tilde{D}h$, $\tilde{D}^2 h$ and is $H^{r-4}$ in $\varepsilon$, $\tilde{x}_1$, and $\tilde{x}_2$. The actual form of $G$ may differ from that in Equation~\eqref{2024.2.23.2}.


The linearised equation for \eqref{2024.4.24.2} is a nonhomogeneous wave equation. We adopt the more suggestive notation $\tilde{x}_1=z, \tilde{x}_2=t$, and take the domain $$\Omega:=\{(z,t) : |z| \le 1, |t| \le 1\}.
$$ The strategy in Han \cite{h} will be followed in the large.

We shall assume as in \cite{h} that there is no $\partial_{tt} h$ on the right-hand side of Equation~\eqref{2024.4.24.2}. Indeed, for $\e>0$ sufficiently small, one may rearrange the term $\partial_{tt} h$ to the left-hand side. Then, by a slight abuse of notations, we may still label the right-hand side of the resulting equation by $\e^3G\left(\varepsilon,\tilde{x}_1,\tilde{x}_2,\tilde{D}h,\tilde{D}^2 h\right)$.

Now we set
\begin{equation} \label{2024.6.12.1}
u:=\p_th\qquad \text{ and }\qquad v:=\p_zh
\end{equation}
and hence rewrite Equation~\eqref{2024.4.24.2} as
\begin{equation} \label{2024.5.25.1}
\left\{ 
\begin{array}{cc}
\partial_t u-\partial_z v=\varepsilon^3 G(\varepsilon,t,z,u,v,\partial_z u,\partial_z v),\\
\partial_t v-\partial_z u=0.
\end{array}
\right.
\end{equation}
Here $G$ is $C^\infty$ in $(u,v,\p_zu, \p_zv)$ and is $H^{r-4}$ in $(\e,t,z)$.

Let us further differentiate Equation~\eqref{2024.5.25.1} with respect to $z$. We obtain
\begin{align} \label{2024.5.25.2}\left\{ 
\begin{array}{cc}
\partial_t \tilde{u}-\varepsilon^3 \partial_{\tilde{u}} G\, \partial_z \tilde{u} -(1+\varepsilon^3 \partial_{\tilde{v}} G)\partial_z \tilde{v}=\varepsilon^3 \tilde G(\varepsilon,t,z,u,v,\tilde{u},\tilde{v}),\\
\partial_t \tilde{v}-\partial_z \tilde{u}=0,
\end{array}
\right.
\end{align}
where
\begin{equation} \label{2024.6.12.2}
\tilde u:=\partial_z u \qquad \text{ and }\qquad \tilde v:=\partial_z v,
\end{equation}
and $$\tilde G=\partial_z G+\tilde{u} \partial_u G +\tilde{v} \partial_v G.$$  Note that $\tilde{G}$ is $C^\infty$ in $(u,v,\p_zu, \p_zv)$ and is $H^{r-5}$ in $(\e,t,z)$.

Define the new variable
\begin{align} \label{2024.6.13.3}
U:=(u, v, \tilde{u}, \tilde{v})^{\top}.
\end{align}
We put together Equation~\eqref{2024.5.25.1} and $1+\varepsilon^3 \partial_{\tilde{v}} G$ times the second identity in Equation~\eqref{2024.5.25.2} to obtain
\begin{align} \label{2024.5.25.3}
A_{\varepsilon}(U) \partial_t U+B_{\varepsilon}(U) \partial_z U=\varepsilon^3 \tilde{G}_{\varepsilon}(U).
\end{align}
The coefficient matrices are given below:
\begin{align} \label{2024.6.11.3}
A_{\varepsilon}(U)=\left(
\begin{array}{cccc}
1 & 0 & 0 & 0 \\
0 & 1 & 0 & 0 \\
0 & 0 & 1 & 0 \\
0 & 0 & 0 & 1+\varepsilon^3 \partial_{\tilde{v}} G(\varepsilon,t,z,U)
\end{array}
\right),
\end{align}
\begin{align} \label{2024.7.2.1}
B_{\varepsilon}(U)= \left(
\begin{array}{cccc}
0 & -1 & 0 & 0 \\
-1 & 0 & 0 & 0 \\
0 & 0 & -\varepsilon^3 \partial_{\tilde{u}} G(\varepsilon,t,z,U) & -1-\varepsilon^3 \partial_{\tilde{v}} G(\varepsilon,t,z,U) \\
0 & 0 & -1-\varepsilon^3 \partial_{\tilde{v}} G(\varepsilon,t,z,U) & 0
\end{array}
\right),
\end{align}
and
$$
\tilde{G}_{\varepsilon}(U)=\left(
\begin{array}{cccc}
G(\varepsilon,t,z,U) \\
0 \\
\tilde G(\varepsilon,t,z,U) \\
0 
\end{array}
\right) .
$$

Consider furthermore the change of variables:
\begin{align} \label{2024.5.25.5}
\tilde{U} := e^{- t} U =e^{- t} (u, v, \tilde{u}, \tilde{v})^{\top}.
\end{align}
Then Equation~\eqref{2024.5.25.3} is equivalent to
\begin{equation}\label{good system}
A_{\varepsilon}(\tilde{U}) \partial_t \tilde{U}+B_{\varepsilon}(\tilde{U}) \partial_z \tilde{U}+C_{\varepsilon}(\tilde{U}) \tilde{U}=\varepsilon^3 \tilde{G}_{\varepsilon}(\tilde{U}),
\end{equation}
where
$$
C_{\varepsilon}(\tilde{U})=\left(
\begin{array}{cccc}
1 & 0 & 0 & 0 \\
0 & 1 & 0 & 0 \\
0 & 0 & 1 & 0 \\
0 & 0 & 0 & 1+\varepsilon^3 \partial_{\tilde{v}} G(\varepsilon,t,z,\tilde{U})
\end{array}
\right).
$$

It is crucial to note that Equation~\eqref{good system} is a positively symmetric first-order hyperbolic system. Indeed, the matrix 
\begin{align} \label{2024.5.27}
\Theta:= C_{\varepsilon}+C^{\top}_{\varepsilon}- \partial_t A_{\varepsilon}- \partial_z B_{\varepsilon}
\end{align}
satisfies
\begin{align*}
\Theta = 
{\rm diag}(2, 2, 2, 2) + \mathcal{O}(\varepsilon^3) > 1.99\, I_4
\end{align*}
for $\e>0$ sufficiently small.

Now we are ready to deduce the existence of a local solution to Equation~\eqref{2024.4.24.2}. We equip Equation~\eqref{good system} with the Cauchy data on $t=-1$:
\begin{equation} \label{2024.5.25.4}
\left\{ 
\begin{array}{cc}
\mathcal{L} \tilde{U}:= A_{\varepsilon}(\tilde{U}) \partial_t \tilde{U}+B_{\varepsilon}(\tilde{U}) \partial_z \tilde{U}+C_{\varepsilon}(\tilde{U}) \tilde{U}=\varepsilon^3 \tilde{G}_{\varepsilon}(\tilde{U}) &\text{ in }\Omega,\\
u=v=\tilde{u}=\tilde{v}=0\qquad \text{ at }t=-1.
\end{array}
\right.
\end{equation}
Recall that $\Omega=[0,1]\times [0,1]$. This Cauchy problem and hence  Equation~\eqref{2024.4.24.2} are solved by applying \cite[Theorems~2.1 and 3.1]{h} as quoted below. Our formulation is slightly more general than what we actually need in this section. The more general form will be invoked in \S\ref{sec: ><} below.

\begin{lemma}[Solubility for positively symmetric hyperbolic system] \label{lemma 3.1}
Given coefficient matrices $A_{\varepsilon}, B_{\varepsilon}, C_{\varepsilon}\in H^m$ with $m \ge 3$ such that the matrix $\Theta$ in Equation~\eqref{2024.5.27} is positive definite; \emph{i.e.}, $\Theta \geq \theta\, {\rm Id}$ for some constant $\theta>0$, and that $A_{\varepsilon}$ is a nonsingular diagonal matrix. Suppose in addition that there exists a constant $\varepsilon_m >0$ such that for any $\varepsilon \in(0,\varepsilon_m)$,
\begin{align} \label{too hot}
\| \partial_z A_{\varepsilon}\|_{H^2}+\| \partial_z B_{\varepsilon}\|_{H^2} \le \eta
\end{align}
holds true for some small $\eta>0$. Then, for any $\tilde{G}_{\varepsilon} \in H^m$, there exists a unique solution $\tilde{U} \in H^m$ to the Cauchy problem 
\begin{equation} \label{2024.7.11}
\left\{ 
\begin{array}{cc}
\mathcal{L} \tilde{U} = A_{\varepsilon}(\tilde{U}) \partial_t \tilde{U}+B_{\varepsilon}(\tilde{U}) \partial_z \tilde{U}+C_{\varepsilon}(\tilde{U}) \tilde{U}=\varepsilon \tilde{G}_{\varepsilon}(\tilde{U}) &\text{ in }\Omega,\\
\tilde{U} \in H_{\mathcal{L}}^1.
\end{array}
\right.
\end{equation}
Here for $A_\e={\rm diag}(\lambda_1, \ldots, \lambda_n)$ we define
$$H_{\mathcal{L}}^1:=\left\{U=(u_1,u_2,u_3,u_4)^{\top} \in H^1: u_i(\cdot, 1)=0 \text { if } \lambda_i<0;\quad u_i(\cdot,-1)=0 \text { if } \lambda_i>0\right\}.$$

 Moreover, there holds for any $s=0,1, \ldots, m$ that
$$
\|\tilde{U}\|_{H^s} \leq c_{s} \|\tilde{G}_{\varepsilon}\|_{H^s}.
$$
The constant $c_s$ depends only on $\theta$, the $H^3$-norms of $A_{\varepsilon}^{-1} B_{\varepsilon}$,  $A_{\varepsilon}^{-1}$, and $C_{\varepsilon}$ for $s \leq 3$, the $H^s$-norms of $A_{\varepsilon}^{-1} B_{\varepsilon}$, $A_{\varepsilon}^{-1}$, and $C_{\varepsilon}$, as well as the $H^{s-1}$-norms of $\partial_z A_{\varepsilon}$ and $\partial_z B_{\varepsilon}$ for $3<s \leq m$.
\end{lemma}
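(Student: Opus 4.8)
The plan is to prove Lemma~\ref{lemma 3.1} by combining the classical energy-estimate theory for positively symmetric (Friedrichs) hyperbolic systems with a fixed-point/continuity argument to handle the quasilinear dependence of $A_\varepsilon, B_\varepsilon, C_\varepsilon$ on $\tilde{U}$. First I would linearise: freeze the argument $\tilde{U}$ in the coefficients, replacing it by a given $\underline{U} \in H^m$ with $\|\underline{U}\|_{H^m} \le M$, and study the linear problem $A_\varepsilon(\underline U)\p_t \tilde U + B_\varepsilon(\underline U)\p_z\tilde U + C_\varepsilon(\underline U)\tilde U = \varepsilon\tilde G_\varepsilon(\underline U)$ with the admissible boundary data encoded in $H^1_{\mathcal L}$. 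Because $A_\varepsilon$ is diagonal nonsingular and the positivity $\Theta \ge \theta\,{\rm Id}$ holds, this is exactly the Friedrichs symmetric-positive setting: the basic $L^2$-energy identity (integrate $\langle \mathcal L\tilde U, \tilde U\rangle$ over $\Omega$, integrate by parts in $t$ and $z$, and use that the boundary terms on $t=\pm1$ have the right sign thanks to the $H^1_{\mathcal L}$ condition, while the lateral boundary contributions are controlled since the characteristic speeds are $O(1)$) yields $\|\tilde U\|_{L^2} \le c_0\|\tilde G_\varepsilon\|_{L^2}$, and existence/uniqueness of a weak solution follows from the standard duality argument (the adjoint problem is symmetric-positive for the reversed time orientation). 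This is precisely the content invoked from Han~\cite{h}, Theorems 2.1 and 3.1, so I would cite those rather than reprove them.

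Next I would propagate the estimate to higher order. Differentiating the equation $s$ times in $(t,z)$ produces a system of the same principal form for $\p^\alpha \tilde U$, $|\alpha|=s$, with commutator terms involving up to $s$ derivatives of $A_\varepsilon^{-1}B_\varepsilon$, $A_\varepsilon^{-1}$, $C_\varepsilon$ and $s-1$ derivatives of $\p_z A_\varepsilon$, $\p_z B_\varepsilon$, plus the already-estimated lower-order norms of $\tilde U$. Using Moser-type product estimates in $H^s$ and Gronwall, one obtains inductively $\|\tilde U\|_{H^s} \le c_s\|\tilde G_\varepsilon\|_{H^s}$ for $s=0,1,\ldots,m$, with $c_s$ depending on the norms listed in the statement; the smallness condition~\eqref{too hot} on $\|\p_z A_\varepsilon\|_{H^2} + \|\p_z B_\varepsilon\|_{H^2}$ is what keeps the constant in the top-order commutator estimate under control (it guarantees the ``perturbation of constant coefficients'' regime, so that the zeroth-order contribution $\Theta$ still dominates after differentiation). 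This step is routine once the $L^2$ theory is in place, so I would state it and point to \cite{h} for the details.

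Finally I would close the quasilinear loop. Define the solution map $\mathcal T : \underline U \mapsto \tilde U$ on the ball $\{\|\underline U\|_{H^m}\le M\}$ of $H^m$. Since $A_\varepsilon, B_\varepsilon, C_\varepsilon$ differ from constant matrices by $O(\varepsilon^3)$ and $\tilde G_\varepsilon$ is smooth in its $U$-arguments, one has $\|\varepsilon\tilde G_\varepsilon(\underline U)\|_{H^m} \le \varepsilon\, C_G(M)$; combined with the linear estimate this gives $\|\mathcal T\underline U\|_{H^m} \le c_m\varepsilon\, C_G(M) \le M$ once $\varepsilon$ is small enough, so $\mathcal T$ maps the ball into itself. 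For contraction, subtract the equations for $\mathcal T\underline U_1$ and $\mathcal T\underline U_2$; the difference solves the same linear symmetric-positive system with right-hand side controlled by $\varepsilon\,\|\underline U_1-\underline U_2\|_{H^m}$ plus terms from the differences of the coefficient matrices acting on $\mathcal T\underline U_2$, all carrying a factor $\varepsilon^3$ — hence $\|\mathcal T\underline U_1-\mathcal T\underline U_2\|_{H^m}\le \tfrac12\|\underline U_1-\underline U_2\|_{H^m}$ for $\varepsilon$ small. Banach's fixed point theorem then yields the unique $\tilde U\in H^m$ solving~\eqref{2024.7.11}, and the estimate $\|\tilde U\|_{H^s}\le c_s\|\tilde G_\varepsilon\|_{H^s}$ is inherited from the linear bound. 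The main obstacle is really bookkeeping rather than conceptual: making sure the contraction/self-map constants depend only on the quantities claimed (in particular that the high-order energy constant $c_m$ does not secretly blow up as $\varepsilon\to 0$), which is exactly why the scaling in~\eqref{f-tilde, hyperbolic} was arranged so the nonlinearity carries the extra powers of $\varepsilon$ and why~\eqref{too hot} is imposed; the delicate hyperbolic-boundary analysis itself is quoted wholesale from Han~\cite{h}.
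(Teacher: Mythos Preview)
Your proposal is correct and aligns with the paper's treatment: the paper does not give an independent proof of Lemma~\ref{lemma 3.1} but simply refers to Han~\cite[Appendix]{h}, noting that the idea is standard energy estimates for symmetric positive systems with the space $H^1_{\mathcal L}$ chosen so that the boundary term $U^\top A_\varepsilon U(\cdot,1)-U^\top A_\varepsilon U(\cdot,-1)$ has favourable sign. Your sketch (Friedrichs $L^2$ estimate via the positivity of $\Theta$, higher-order propagation by differentiation and commutator bounds controlled through~\eqref{too hot}, then a Banach fixed-point closure for the quasilinear dependence) is exactly the structure of that argument, spelled out in somewhat more detail than the paper itself provides.
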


We refer to Han \cite[Appendix]{h} for a proof of this lemma. Despite all its technical details, the idea is, in fact, rather straightforward: one proceeds with standard energy estimates, for which the solution space $H_{\mathcal{L}}^1$ is constructed to ensure that the boundary terms arising from integration by parts are of favourable sign. That is, 
$$
U^{\top} A_{\varepsilon} U(\cdot, 1)-U^{\top} A_{\varepsilon} U(\cdot,-1) \geq 0 \qquad \text { for any } U \in H_{\mathcal{L}}^1 .
$$

In our case, recall from Equation~\eqref{2024.6.11.3} that $A_{\varepsilon}={\rm diag}\left(\lambda_1, \ldots, \lambda_4\right)$ has $\lambda_i>0$ for all $i \in \{1,2,3,4\}$. Hence, $$H^1_{\mathcal{L}}=\left\{U \in H^1:\, u_i \big|_{t=-1}=0 \text{ for each }i \in \{1,2,3,4\}\right\}.$$ This explains why we consider the Cauchy problem~\eqref{2024.5.25.4} here. In addition, directly from the definition of $A_\e$ and $B_\e$ in  Equations~\eqref{2024.6.11.3} and \eqref{2024.7.2.1}, it is easy to verify the condition~\eqref{too hot}.

Therefore, we are in the situation of applying Lemma~\ref{lemma 3.1} to deduce the existence of solution $\tilde{U}$ to Equation~\eqref{2024.5.25.4}. Note that the coefficient matrices $A_\e$ and $B_\e$ (see Equations~\eqref{2024.6.11.3}, \eqref{2024.7.2.1}) are $H^{r-4}$ in $\tilde{x}$ and the source term $\tilde{G}_\e$ is $H^{r-5}$ in $\tilde{x}$, so 
\begin{equation*}
m=r-5;\qquad r\geq 8
\end{equation*}
in light of the conditions in Lemma~\ref{lemma 3.1}. The previous arguments in this section yield a solution $\tilde{U}$ in $H^{r-5}$. By the definition of $\tilde{U}$ in Equation~\eqref{2024.5.25.5}, we find that the second-order derivatives of $h$ are in $H^{r-5}$, hence $h \in H^{r-3}$ in a neighbourhood of $x_0$. (Here we also use the uniqueness of solution to deduce that $\tilde{u}=\p_zu=\p_tv$ and $\tilde{v}=\p_zv$.) From Equation~\eqref{f-tilde, hyperbolic} we then infer that $\tilde{f} \in H^{r-3}$. This in turn yields the local existence of asymptotic direction $X \in H^{r-4}$ by Equations~\eqref{J na f} and \eqref{2024.7.11.5}. 

This proves the local existence of a nontrivial asymptotic direction for the case of Gaussian curvature $K(x_0)>0$.

\section{The case of Gaussian curvature changing sign cleanly}\label{sec: ><}

In this section, we prove Theorem~\ref{thm: main} in the case where the Gauss curvature $K$ changes sign cleanly at $x_0=0$. That is, 
\begin{equation*}
K(0)=0\qquad\text{and}\qquad\na K(0) \neq 0.
\end{equation*}

The overall strategy is parallel to \S\ref{sec, <} above, which is essentially an adaptation of \cite{h}. In particular, the crux is to reduce the problem to a  positively symmetric first-order hyperbolic PDE system for the variable $\tilde{U}$ as in Equation~\eqref{2024.5.25.5}. Nevertheless, the mixed-type features for the Bao--Ratiu equation in the changing sign cleanly case lead to considerable additional technical difficulties. Among other issues, in order to warrant the positivity of the resulting first-order hyperbolic system, we need to judiciously select all the free variables up to degree 3 in the initial approximate polynomial $\hat{h}$; see Equation~\eqref{initial approx} below.

To begin with, as $K$ changes sign cleanly at $x_0=0$,  we may choose local coordinates $\{x_1,x_2\}$ about $0$ such that
\begin{equation} \label{2024.7.11.2}
K(x_1,x_2)=2R x_2+\mathcal{O}\left(x_1^2+x_2^2\right),
\end{equation}
by virtue of the Taylor expansion. The constant $R>0$ is to be determined later. In fact, by a scaling of the form $(x_1, x_2) \mapsto (\lambda x_1, \lambda x_2)$ or, equivalently, a scaling of the Riemannian metric $g \mapsto \lambda^{-2}g$ for  suitable $\lambda>0$, we have the freedom of choosing $R$ to be sufficiently large. 

Now let us construct the initial approximate solution $\hat{h}$, which is the counterpart to Equations~\eqref{2024.2.23.1} and \eqref{2024.4.24.1} in \S\ref{sec, >} and \S\ref{sec, <}, respectively. We take the ansatz:
\begin{align} \label{initial approx}
\hat{h}(x_1,x_2)&=\frac{\gamma^2}{2}x_1^2+\gamma x_1 x_2+\frac{1}{2} x_2^2\nonumber\\
&\qquad\qquad+a x_1^3+b x_1^2 x_2+c x_1x_2^2+d x_2^3+P_4(x_1,x_2)
\end{align}
where $\gamma$, $a$, $b$, $c$, and $d$ are constants, and $P_4$ is a degree-4-homogeneous polynomial in $x_1$ and $x_2$. Recall the operator $\mathcal{G}$ from Equation~\eqref{2024.2.22}. We shall specify these undetermined coefficients to ensure that 
\begin{equation} \label{2024.6.11.1}
\mathcal{G}\left(\hat{h}\right)=\mathcal{O}\left(\left(x_1^2+x_2^2\right)^{{3}/{2}}\right).
\end{equation}

To this end, recall from Remark~\ref{remark: Gamma(0)=0} that $\Gamma_{ij}^k(0)=0$. Taylor expansion yields that
\begin{equation*} 
\left\{ 
\begin{array}{cc}
\Gamma_{11}^1(x)=s_1 x_1 +t_1 x_2 +\mathcal{O}\left(x_1^2+x_2^2\right),\\
\Gamma_{12}^1(x)=s_2 x_1 +t_2 x_2 +\mathcal{O}\left(x_1^2+x_2^2\right),\\
\Gamma_{22}^1(x)=s_4 x_1 +t_4 x_2 +\mathcal{O}\left(x_1^2+x_2^2\right),
\end{array}
\right.
\end{equation*}
where $s_1$, $t_1$, $s_2$, $t_2$, $s_4$, and $t_4$ are real constants determined solely by the fixed Riemannian metric $g$. For reasons that shall be made clear in  subsequent developments, we select $(a,b,c,d)$ as follows:
\begin{equation} \label{2024.6.14.1}
\begin{cases}
a=\frac{1}{6}s_1,\\
b=\frac{1}{2}s_2,\\
c=\frac{1}{2}s_4,\\
\gamma^2(6d-t_4)-2\gamma(2c-t_2)+(2b-t_1)=R g^{11}(0)\det(g)(0).
\end{cases}
\end{equation}
In the final line we set $\gamma$ and $R$ free and specify $d$. Arguments analogous to those leading to Equation~\eqref{G1} in \S\ref{sec, >} and  Equation~\eqref{G2} in \S\ref{sec, <} allow us to achieve  \eqref{2024.6.11.1} by selecting $P_4$ appropriately. Indeed, for each given pair $(\gamma,R)$ we may choose $P_4$ to eliminate the quadratic terms arising from the expansion of $\mathcal{G}\left(\hat{h}\right)$. Thus, by now, the free parameters awaiting further specification in the initial approximate solution~\eqref{initial approx} are $\gamma$ and $R$ only.


Next we introduce 
\begin{equation}\label{2024.7.11.3}
\begin{cases}
x=\varepsilon^2 \tilde{x},\\
\tilde{f}(x)=\hat{h}(x)+\varepsilon^{7} h(\tilde{x}).
\end{cases}
\end{equation}
The choice here is different from Equations~\eqref{2024.4.22.1} and \eqref{f-tilde, hyperbolic} for the cases $K(0)>0$ and $K(0)<0$, respectively. This is because in the case of Gaussian curvature changing sign cleanly, Equation~\eqref{2024.7.11.2} for $K$ has no constant terms (\emph{i.e.}, order-zero terms in $x$). Trial and error shows that the powers $\e^2$ and $\e^7$ in Equation~\eqref{2024.7.11.3} turns out  favourable for energy estimates. 

To proceed, as in \S\ref{sec, <} we designate
\begin{align*}
 \tilde{x}_1=z\qquad\text{and}\qquad \tilde{x}_2=t,
\end{align*}
as well as 
\begin{align*}
u=\p_th \qquad\text{and}\qquad v=\p_zh.
\end{align*}
Substituting the choices of coordinates $\{x_1,x_2\}$ and functions $\hat{h}$, $\tilde{f}$ (see Equations~\eqref{2024.7.11.2}, \eqref{initial approx}, \eqref{2024.6.14.1}, and \eqref{2024.7.11.3}) into the Bao--Ratiu Equation~\eqref{2024.2.22} of the degenerate Monge--Amp\`{e}re form, we recast Equation~\eqref{2024.2.22} into the following PDE system: 
\begin{equation} \label{2024.6.13.1}
\left\{ 
\begin{array}{rl}
\frac{1}{\varepsilon^2} [\gamma^2+(2b-t_1)\varepsilon^2 t]\partial_{t} u-\frac{2}{\varepsilon^2}[\gamma+(2c-t_2)\varepsilon^2 t]\partial_{z} u\\
+\frac{1}{\varepsilon^2}[1+(6d-t_4)\varepsilon^2 t]\partial_{z} v&=\varepsilon G(\varepsilon,t,z,u,v,\partial_z u,\partial_z v),\\
\partial_t v-\partial_z u&=0.
\end{array}
\right.
\end{equation}

Here, by an abuse of notations, we again denote the source term in Equation~\eqref{2024.6.13.1} as $G$. It is slightly different from the same symbols in \S\S\ref{sec, >} and \ref{sec, <} (see Equations~\eqref{2024.2.23.2} and \eqref{2024.4.24.2}). Indeed, our $G$ here is $C^\infty$ in $\tilde{D}h$, $\tilde{D}^2 h$ and is $H^{r-5}$ in $\varepsilon$, $\tilde{x}_1$, and $\tilde{x}_2$: the choice of $P_4$ in Equation~\eqref{initial approx} depends on up to two derivatives of $\G^k_{ij}$ (hence three derivatives of $g \in H^r$), and the operator $\mathcal{G}$ in Equation~\eqref{2024.2.22} involves two more derivatives.

Reasoning as in the paragraph right above Equation~\eqref{2024.6.12.1} in \S\ref{sec, <}, we assume from now on that $G$ contains no $\p_tu$.


Next, as in Equation~\eqref{2024.6.12.2} , we set 
\begin{equation*}
\tilde u:=\partial_z u \qquad \text{and}\qquad \tilde v:=\partial_z v.
\end{equation*}
Differentiation of Equation~\eqref{2024.6.13.1} with respect to $z$ yields that
\begin{equation} \label{2024.6.13.2}
\left\{ 
\begin{array}{rr}
\frac{1}{\varepsilon^2}[\gamma^2+(2b-t_1)\varepsilon^2 t]\partial_{t} \tilde u-\frac{2}{\varepsilon^2}[\gamma+(2c-t_2)\varepsilon^2 t+\varepsilon^3 \partial_{\tilde u} G]\partial_{z} \tilde u\\
+\frac{1}{\varepsilon^2}[1+(6d-t_4)\varepsilon^2 t-\varepsilon^3 \partial_{\tilde v} G]\partial_{z} \tilde v&=\varepsilon \tilde G(\varepsilon,t,z,u,v,\tilde u,\tilde v),\\
\partial_t \tilde v-\partial_z \tilde u=0,
\end{array}
\right.
\end{equation}
where
 $$\tilde G := \partial_z G+\tilde{u} \partial_u G +\tilde{v} \partial_v G.$$
Note that $\tilde{G}$ is $C^\infty$ in $(u,v,\tilde{u},\tilde{v})$ and is $H^{r-6}$ in $(\e,t,z)$.

Now we perform a  change of variables different from Equation~\eqref{2024.5.25.5} in the case $K(x_0)<0$. Denote as in Equation~\eqref{2024.6.13.3} that
\begin{equation*}
U=(u, v, \tilde{u}, \tilde{v})^{\top}.
\end{equation*} 
Then we put
\begin{equation}\label{yyy}
\tilde{U} := e^{- \varepsilon^2 t} U.
\end{equation}
The advantage for introducing this new variable is manifested in Equation~\eqref{xxx} below.

 Equations~\eqref{2024.6.13.1} and \eqref{2024.6.13.2} together are equivalent to the following PDE system:
$$
A_{\varepsilon}(\tilde{U}) \partial_t \tilde{U}+B_{\varepsilon}(\tilde{U}) \partial_z \tilde{U}+C_{\varepsilon}(\tilde{U}) \tilde{U}=\varepsilon \tilde{G}_{\varepsilon}(\tilde{U}),
$$
where
\begin{small}
\begin{align} \label{2024.6.14.5}
A_{\varepsilon}(\tilde{U})=\frac{1}{\varepsilon^2}\left(
\begin{array}{cccc}
\gamma^2+(2b-t_1)\varepsilon^2 t & 0 & 0 & 0 \\
0 & -[1+(6d-t_4)\varepsilon^2 t] & 0 & 0 \\
0 & 0 & \gamma^2+(2b-t_1)\varepsilon^2 t & 0 \\
0 & 0 & 0 & -[1+(6d-t_4)\varepsilon^2 t-\varepsilon^3 \partial_{\tilde v} G]
\end{array}
\right),
\end{align}
\end{small}
\begin{footnotesize}
\begin{align}\label{B, new}
B_{\varepsilon}(\tilde{U})=\frac{1}{\varepsilon^2} \left(
\begin{array}{cccc}
-2[\gamma+(2c-t_2)\varepsilon^2 t] & 1+(6d-t_4)\varepsilon^2 t & 0 & 0 \\
1+(6d-t_4)\varepsilon^2 t & 0 & 0 & 0 \\
0 & 0 & -2[\gamma+(2c-t_2)\varepsilon^2 t+\varepsilon^3 \partial_{\tilde u} G] & 1+(6d-t_4)\varepsilon^2 t-\varepsilon^3 \partial_{\tilde v} G \\
0 & 0 & 1+(6d-t_4)\varepsilon^2 t-\varepsilon^3 \partial_{\tilde v} G & 0
\end{array}
\right),
\end{align}
\end{footnotesize}
\begin{small}
\begin{align}
C_{\varepsilon}(\tilde{U})=\left(
\begin{array}{cccc}
\gamma^2+(2b-t_1)\varepsilon^2 t & 0 & 0 & 0 \\
0 & -[1+(6d-t_4)\varepsilon^2 t] & 0 & 0 \\
0 & 0 & \gamma^2+(2b-t_1)\varepsilon^2 t & 0 \\
0 & 0 & 0 & -[1+(6d-t_4)\varepsilon^2 t-\varepsilon^3 \partial_{\tilde v} G]
\end{array}
\right),
\end{align}
\end{small}
and
\begin{align} \label{2024.6.14.6}
\tilde{G}_{\varepsilon}(\tilde{U})=\left(
\begin{array}{cccc}
G(\varepsilon,t,z,\tilde{U}) \\
0 \\
\tilde G(\varepsilon,t,z,\tilde{U}) \\
0 
\end{array}
\right).
\end{align}

We shall conclude the proof by employing Lemma~\ref{lemma 3.1}. For this purpose, it is crucial to check the positive definiteness of the matrix  
\begin{align*}
\Theta := C_{\varepsilon}+C^{\top}_{\varepsilon}- \partial_t A_{\varepsilon}- \partial_z B_{\varepsilon} 
\end{align*}
defined as in Equation~\eqref{2024.5.27}. Straightforward computation yields that
\begin{small}
\begin{align} \label{xxx}
\Theta= \left(
\begin{array}{cccc}
2\gamma^2-(2b-t_1) & 0 & 0 & 0 \\
0 & (6d-t_4)-2 & 0 & 0 \\
0 & 0 & 2\gamma^2-(2b-t_1) & 0 \\
0 & 0 & 0 &(6d-t_4)-2 
\end{array}
\right) + \mathcal{O}(\varepsilon).
\end{align}
\end{small}
Thus, it suffices to require \begin{equation} \label{2024.6.14.2}
2\gamma^2-(2b-t_1)>0 \qquad\text{and}\qquad (6d-t_4)-2>0
\end{equation}
for $\Theta$ to be positive definite.

In view of the choice of $a$, $b$, $c$, and $d$ in Equation~\eqref{2024.6.14.1}, the conditions in Equation~\eqref{2024.6.14.2} are equivalent to 
\begin{equation} \label{2024.6.14.3}
\begin{cases}
2\gamma^2-(s_2-t_1)>0,\\
\frac{2}{\gamma}(s_4-t_2)-\frac{1}{\gamma^2}(s_2-t_1)-2+\frac{1}{\gamma^2}R g^{11}(0)\det\,g(0)>0.
\end{cases}
\end{equation} 
The first inequality is satisfied when $\gamma>0$ is chosen to be sufficiently large, depending on $g$ only. With such a $\gamma$ fixed once and for all, let us specify $R$ to meet the second condition. Indeed, observe that  
\begin{align*}
&\frac{2}{\gamma}(s_4-t_2)-\frac{1}{\gamma^2}(s_2-t_1)-2+\frac{1}{\gamma^2}R g^{11}(0)\det\,g(0)\\
&\qquad\qquad\qquad >R g^{11}(0)\det\,g(0)\frac{1}{\gamma^2}+2(s_4-t_2)\frac{1}{\gamma}-4.
\end{align*}
As $g^{11}(0)>0$ and $\det\,g >0$, one may choose a sufficiently large $R$ to ensure the positivity of the right-hand side. It is crucial to note here that the choice of $R$, which is tantamount to the choice of $\lambda=\lambda(R)$ in $(x_1, x_2) \mapsto (\lambda x_1, \lambda x_2)$ or in the scaling of the Riemannian metric $g \mapsto \lambda^{-2}g$, does not alter the values of $s_i$ and $t_i$ ($i\in\{1,2,4\}$), \emph{i.e.}, the first-order terms in the Christoffel symbols $\G^k_{ij}(x)$. In fact, a simple exercise in Riemannian geometry shows that $g \mapsto \lambda^{-2}g$ leaves invariant $\G^k_{ij}$. Alternatively, write for arbitrary $\mu>0$ that
\begin{equation*}
\Gamma(x) \equiv \bar{\Gamma}(\tilde{x})\qquad\text{where}\qquad x:=\mu\tilde{x}.
\end{equation*}
By Taylor expansion we have
\begin{align*}
\Gamma(x)&=\Gamma(\mu \tilde{x}) \\
&=\Gamma(0)+x \cdot D_{x} \Gamma(0)+\mathcal{O}(x^2)\\
&=\Gamma(0)+\tilde{x} \cdot \left\{D_{\tilde{x}} \Gamma(\mu \tilde{x})\Big|_{\tilde{x}=0}\right\}+\mathcal{O}\left(\tilde{x}^2\right)
\end{align*}
and
\begin{align*}
\bar{\Gamma}(\tilde{x}) =\bar{\Gamma}(0)+\tilde{x} \cdot D_{\tilde{x}} \bar{\Gamma}(0)+\mathcal{O}\left(\tilde{x}^2\right).
\end{align*}
Thus the assertion follows by comparing the terms in $\Gamma(x)$ and $\bar{\G}(\tilde{x})$.


We are now ready to conclude. For $$\Omega=[0,1]\times [0,1]$$ as in \S\ref{sec, <}, let us consider the following  problem:
\begin{equation} \label{new, eq}
\left\{ 
\begin{array}{cc}
\mathcal{L} \tilde{U}:= A_{\varepsilon}(\tilde{U}) \partial_t \tilde{U}+B_{\varepsilon}(\tilde{U}) \partial_z \tilde{U}+C_{\varepsilon}(\tilde{U}) \tilde{U}=\varepsilon \tilde{G}_{\varepsilon}(\tilde{U}) &\text{ in }\Omega,\\
{u=\tilde{u}=0 \text{ on }t=-1,\qquad v=\tilde{v}=0 \text{ on }t=1}.
\end{array}
\right.
\end{equation}
The coefficient matrices $A_{\varepsilon}$, $B_{\varepsilon}(\tilde{U})$, $C_{\varepsilon}(\tilde{U})$, and $\tilde{G}_{\varepsilon}(\tilde{U})$ are defined in Equations~\eqref{2024.6.14.5}--\eqref{2024.6.14.6}. One readily deduces from  Lemma~\ref{lemma 3.1}  the existence and uniqueness of the solution $\tilde{U} \in H^m$ to Equation~\eqref{new, eq}, which in turn leads to a solution to the Bao--Ratiu Equation~\eqref{2024.2.22}. Note by Equations~\eqref{2024.7.11.5}, \eqref{initial approx}, and \eqref{2024.7.11.3} that  $\tilde{f}={\rm constant}$ is not a solution to  Equation~\eqref{2024.2.22}.

Due to the regularity of $G$ and $\tilde{G}$, the coefficient matrices $A_\e$, $B_\e$ defined in Equations~\eqref{2024.6.14.5} and \eqref{B, new} are $H^{r-5}$ in $\tilde{x}$, while the source term $\tilde{G}_\e$ in Equation~\eqref{2024.6.14.6} is $H^{r-6}$ in $\tilde{x}$. Hence, in view of the conditions in Lemma~\ref{lemma 3.1}, we choose
\begin{equation*}
m=r-6;\qquad r\geq 9.
\end{equation*}
The previous arguments in this section yield a solution $\tilde{U}$ in $H^{r-6}$. By the definition of $\tilde{U}$ in Equation~\eqref{yyy}, we find that the second-order derivatives of $h$ are in $H^{r-6}$, hence $h \in H^{r-4}$ in a neighbourhood of $x_0$. (Here we also use the uniqueness of solution to deduce that $\tilde{u}=\p_zu=\p_tv$ and $\tilde{v}=\p_zv$.) From Equation~\eqref{2024.7.11.3} we then infer that $\tilde{f} \in H^{r-4}$. This in turn yields the local existence of asymptotic direction $X \in H^{r-5}$ by Equations~\eqref{J na f} and \eqref{2024.7.11.5}.

This completes the proof of the local existence of a nontrivial asymptotic direction in the case that the Gaussian curvature $K$ changing sign cleanly at $x_0$.

\medskip

\noindent
{\bf Acknowledgement}. Both authors are greatly indebted to Professor Qing Han for insightful discussions and generous sharing of ideas. The research of SL is supported by NSFC Projects 12201399, 12331008, and 12411530065, Young Elite Scientists Sponsorship Program by CAST  2023QNRC001, the National Key Research $\&$ Development Program 2023YFA1010900, and the Shanghai Frontier Research Institute for Modern Analysis. The research of XS is partially supported by the National Key Research $\&$ Development Program 2023YFA1010900. 

\medskip
\noindent
{\bf Competing Interests Statement}. We declare that there is no conflict of interests involved.


\begin{thebibliography}{99}

\bibitem{amano}
K. Amano, The Dirichlet problem for degenerate elliptic 2-dimensional Monge--Amp\`{e}re equation, \textit{Bull. Austral. Math. Soc.} \textbf{37} (1988), 389--410.

\bibitem{a}
V.~I. Arnold, Sur la géométrie différentielle des groupes de Lie de dimension infinie et ses applications à l'hydrodynamique des fluides parfaits. \textit{Ann. Inst. Fourier (Grenoble)} \textbf{16} (1966), 319--361.

\bibitem{ak}
V.~I. Arnold and B.~A. Khesin, \textit{Topological methods in hydrodynamics}. Second edition. Applied Mathematical Sciences, 125. Springer, Cham, 2021.

\bibitem{blr}
D. Bao, J. Lafontaine, and T. Ratiu, On a non-linear equation related to the geometry of the diffeomorphism group, \textit{Pacific J. Math.} \textbf{158} (1993), 223--242.

\bibitem{br}
D. Bao and T. Ratiu, On the geometric origin and the solvability of a degenerate Monge--Ampere equation, \textit{Proc. Symp. Pure Math.} \textbf{54} (1993), 55--69.

\bibitem{cns}
L. Caffarelli, L. Nirenberg, and J. Spruck, The Dirichlet problem for nonlinear second-order elliptic equations. I. Monge-Ampère equation, \textit{Comm. Pure Appl. Math.} \textbf{37} (1984), no.3, 369--402.

\bibitem{cy}
S.~Y. Cheng and S.~T. Yau, On the regularity of the Monge-Ampère equation $\det\left({\partial^2 u}/{\partial x_i \partial x_j}\right) = F(x,u)$, \textit{Comm. Pure Appl. Math.} \textbf{30} (1977), no. 1, 41--68.

\bibitem{ds}
P. Daskalopoulos and O. Savin, On Monge-Ampère equations with homogeneous right-hand sides,  \textit{Comm. Pure Appl. Math.} \textbf{62} (2009), no. 5, 639--676.

\bibitem{em}
D.~G. Ebin and J. Marsden, Groups of diffeomorphisms and the motion of an incompressible fluid, \textit{Ann. of Math.} (2) \textbf{92} (1970), 102--163.



\bibitem{f}
A. Figalli, The Monge-Ampère Equation and its Applications, Zur. Lect. Adv. Math., EMS Press, Zürich, 2017.

\bibitem{gt}
D. Gilbarg and N.~S. Trudinger, \textit{Elliptic partial differential equations of second order}, Vol. 224. No. 2. Berlin: springer, 1977.

\bibitem{g}
B. Guan, The Dirichlet problem for Monge-Ampère equations in non-convex domains and spacelike hypersurfaces of constant Gauss curvature, \textit{Trans. Amer. Math. Soc.} \textbf{350} (1998), no 12, 4955--4971.

\bibitem{gpf}
P. Guan, Regularity of a class of quasilinear degenerate elliptic equations, \textit{Adv. Math.} \textbf{132} (1997), 24--45.

\bibitem{gs}
P. Guan and E. Sawyer, Regularity of subelliptic Monge–Ampère equations in the plane, \textit{Trans. Amer. Math. Soc.} \textbf{361} (2009), 4581--4591. 

\bibitem{gu}
C. E. Gutiérrez, The Monge-Ampère equation. Progress in Nonlinear Differential Equations and Their Applications, 44. Birkhäuser, Boston, 2001.

\bibitem{h}
Q. Han, On the isometric embedding of surfaces with Gauss curvature changing sign cleanly, \textit{Comm. Pure Appl. Math.} \textbf{58} (2005), 285--295.

\bibitem{hh}
Q. Han and J.-X. Hong, \textit{Isometric Embedding of Riemannian Manifolds in Euclidean Spaces}. American Mathematical Society, Providence 2006.

\bibitem{l}
C. S. Lin, The local isometric embedding in $\R^3$ of 2-dimensional Riemannian manifolds with nonnegative curvature, \textit{J. Differential Geom.} \textbf{21} (1985), no. 2, 213--230.

\bibitem{mv}
J. Moser and A.~P. Veselov, Two-dimensional `discrete hydrodynamics' and Monge--Amp\`{e}re equations, \textit{Ergod. Th. $\&$ Dynam. Sys.} \textbf{22} (2002), 1575--1583.

\bibitem{p}
B. Palmer, The Bao--Ratiu equations on surfaces, \textit{Proc. R. Soc. Lond. A} \textbf{449} (1995), 623--627.




\end{thebibliography}
\end{document}